\documentclass[11pt]{article}

\usepackage{amsmath,amsthm,verbatim,amssymb,amsfonts,amscd, graphicx}
\usepackage{graphics}
\usepackage{tikz-cd}
\topmargin0.0cm
\headheight0.0cm
\headsep0.0cm
\oddsidemargin0.0cm
\textheight23.0cm
\textwidth16.5cm
\footskip1.0cm
\theoremstyle{plain}
\newtheorem{theorem}{Theorem}
\newtheorem{corollary}{Corollary}
\newtheorem{lemma}{Lemma}

\theoremstyle{definition}

\DeclareMathOperator{\Arf}{Arf}
\DeclareMathOperator{\Spin}{Spin}
\DeclareMathOperator{\Pin}{Pin}
\DeclareMathOperator{\Hom}{Hom}
\DeclareMathOperator{\Sp}{Sp}

\newcommand{\R}{\mathbb{R}}

\newcommand{\Q}{\mathbb{Q}}
\newcommand{\Z}{\mathbb{Z}}
\newcommand{\N}{\mathbb{N}}

\begin{document}

\title{Some properties of $\Pin^\pm$-structures on compact surfaces}
\author{Michael R. Klug and Luuk Stehouwer}
\maketitle

\begin{abstract}
    We show that two $\Pin$-structures on a surface differ by a diffeomorphism of the surface if and only if they are cobordant (for comparison, the analogous fact has already been shown for $\Spin$-structures). We give a construction that shows that this does not extend to dimensions greater than two. In addition, we count the number of $\Pin$-structures on a surface in a given cobordism class.   
\end{abstract} 

This note discusses $\Pin$-structures on closed 2-dimensional manifolds (for simplicity, here we mean $\Pin^-$-structures, though $\Pin^+$-structures are also considered) and how certain results concerning $\Spin$-structures extend to $\Pin$-structures.  $\Spin$-structures on a fixed compact surfaces are classified up to diffeomorphism and up to cobordism by the Arf invariant, and further, the exact number of $\Spin$-structures with a given Arf invariant is known (see section \ref{sec:spin}).  For $\Pin$ structures, the cobordism class of the $\Pin$-structure is determined by the Brown invariant which we show also determines the diffeomorphism class of the $\Pin$-structure (see section \ref{sec:pin} for the relevant definitions).  We also determine the number of $\Pin$-structures on a given surface with a given Brown invariant.  To conclude, we discuss how these results fail to extend to dimensions greater than two.   

In a physical setting (which is not necessary for a reading of this paper, but which perhaps provides context), $\Pin$-structures are necessary to define spinors on unorientable Euclidean spacetimes, for example in the presence of time-reversing symmetries. 
Furthermore, their bordism groups have recently caught a lot of attention, because unitary invertible topological field theories are classified by their partition function, which is a homomorphism from the bordism group to $U(1)$.
Such invertible field theories have applications in anomalies of quantum field theories and symmetry-protected topological phases of matter.
On surfaces in particular, homomorphisms from the $\Pin^{\pm}$-bordism groups to $U(1)$ classify symmetry-protected topological phases protected by a time-reversing symmetry $T$ with $T^2 = \mp$.
In the case of $\Omega^{\Pin^-}_2$ the low-energy limit of the Kitaev chain is a generator of $\Hom(\Omega^{\Pin^-}_2,U(1)) \cong \Z/8\Z$, as shown in \cite{debray2018arf}.

In section \ref{sec:spin}, we provide context for our work on $\Pin$-structures by recalling the analogous results for $\Spin$-structures.  In section \ref{sec:pin}, we present our main results, echoing the results for $\Spin$-structures from section \ref{sec:spin}.  In section \ref{sec:high_dim}, we show the distinction between being diffeomorphic and being cobordant for $\Pin$-structures in dimensions greater than two. 

\subsection*{Acknowledgements}

Both authors would like to thank the
Max Planck Institute for Mathematics in Bonn where this research was carried out for support.  In addition, we want to thank Jean Raimbault who told us the proof of Lemma \ref{lem:higher} and pointed us to \cite{hyperbolicmanifolds}. 

\section{Introduction and $\Spin$-structures on closed surfaces} \label{sec:spin}

In this section, we collect several results concerning $\Spin$-structures on compact surfaces in order to set the stage for the following section where we discuss analogous results for $\Pin^{\pm}$-structures.  

Let $\Sigma$ be a closed oriented genus $g$ surface. A map
 $$
 q : H_1(\Sigma; \mathbb{Z}/2\mathbb{Z}) \to \mathbb{Z}/2\mathbb{Z}
$$
which satisfies
$$
 q(x+y) = q(x) + q(y) + x \cdot y
$$
 for all $x,y \in H_1(\Sigma, \mathbb{Z}/2\mathbb{Z})$ where $x \cdot y$ denotes the intersection of the classes $x$ and $y$ is called a \emph{quadratic refinement} of the intersection form.  

 The \emph{Arf invariant} of such a quadratic refinement $q$, denoted $\Arf(q)$, can be defined as the value in $\mathbb{Z}/2\mathbb{Z}$ where $q$ has a larger preimage (see for example Corollary 9.5 of \cite{saveliev_book}). 
 Alternatively, $\Arf(q)$ is   
 $$
 \Arf(q) = \sum_{i=1}^{g} q(a_i) q(b_i) \in \mathbb{Z}/2\mathbb{Z}
 $$
 where $a_1,b_1,...,a_g,b_g$ is a symplectic basis for $H_1(\Sigma, \mathbb{Z}/2\mathbb{Z})$ with respect to the intersection form. 
 This is independent of the choice of symplectic basis \cite{arf}.

Recall that a spin structure on an oriented surface is a $\Spin_2$-structure $s = (P,\alpha)$ on the tangent bundle, where $P \to \Sigma$ is a principal $\Spin_2$-bundle and $\alpha: P \times_{\Spin_2} \R^2 \to T \Sigma$ an isomorphism of vector bundles. 
We only consider spin structures up to homotopy, so an \emph{isomorphism of spin structures} between $(P_1,\alpha_1)$ and $(P_2,\alpha_2)$ will be a map $f: P_1 \to P_2$ of $\Spin_2$-bundles (covering the identiy on $\Sigma$) such that $\alpha_2 \circ f_*$ is homotopic to $\alpha_1$.
We will identify isomorphic spin structures from now.
In \cite{johnson_quadratic}, Johnson gives a bijection between the set of isomorphism classes of spin structures on $\Sigma$ and the set of quadratic refinements of the intersection form.  The \emph{Arf invariant} of a spin structure $s$ on $\Sigma$ is defined to be Arf invariant of the corresponding quadratic refinement $q_s$ and denoted $\Arf(s)$.

 A \emph{$\Spin$-diffeomorphism} is a coarser notion of equivalence for two spin structures on $\Sigma$ in which we also allow changes of $\Sigma$ by a diffeomorphism.
 Two spin structures $s_1$ and $s_2$ on $\Sigma$ are called \emph{$\Spin$-diffeomorphic} if there is a diffeomorphism $\phi : \Sigma \to \Sigma$ with the property that $\phi^\ast s_1 = s_2$.  
 By considering the effect of a diffeomorphism $\Sigma \to \Sigma$ on $H_1(\Sigma; \mathbb{Z})$, there is a homomorphism from the mapping class group of $\Sigma$ to the group $\Sp_{2g}(\mathbb{Z})$ of integral matrices that preserve the intersection form on $H_1(\Sigma, \mathbb{Z})$ (we have implicitly chosen a basis for $H_1(\Sigma;\mathbb{Z})$ for notational simplicity, but this is not necessary).  
 We thus obtain an action of $\Sp_{2g}(\mathbb{Z})$ on the set of spin structures on $\Sigma$.  
 In terms of the quadratic refinements, this action is given by $A \cdot q(x) = q(A^{-1} x)$ for $q$ a quadratic refinement, $A \in \Sp_{2g}(\mathbb{Z})$, and $x \in H_1(\Sigma; \mathbb{Z})$.  Therefore, $q$ and $A \cdot q$ have the same number of $0$ and $1$ values and therefore have the same Arf invariant.  
 This homomorphism to $\Sp_{2g}(\mathbb{Z})$ is a surjection (see for example Theorem 6.4 of \cite{farb_margalit}) and from Corollary 2 of \cite{johnson_quadratic}, it follows that two spin structures on $\Sigma$ are $\Spin$-diffeomorphic if and only if they have the same Arf invariants.  In \cite{johnson_bc}, Johnson proves that there are $2^{g-1}(2^g + 1)$ spin structures on $\Sigma$ with Arf invariant $0$ and $2^{g-1}(2^g - 1)$ spin structures on $\Sigma$ with Arf invariant $1$.  

 Given two closed oriented surfaces $\Sigma_1$ and $\Sigma_2$, with respective spin structures $s_1$ and $s_2$, we can consider the relation of spin cobordance between them (see for example \cite{kirbytaylor}).  
 In \cite{kirbytaylor}, it is shown that $(\Sigma_1, s_1)$ and $(\Sigma_2, s_2)$ are spin cobordant if and only if $\Arf(s_1) = \Arf(s_2)$.  In other words, the map
 \begin{align*}
	 \Arf : \Omega_2^{\Spin} &\to \mathbb{Z}/2\mathbb{Z} \\
		 (\Sigma, s)   &\mapsto \Arf(s)
 \end{align*}
is an isomorphism.  From the proceeding remarks on $\Spin$-diffeomorphism, we thus conclude that two spin structures on $\Sigma$ are cobordant if and only if they are $\Spin$-diffeomorphic.

\section{$\Pin^\pm$ structures on closed surfaces} \label{sec:pin}

We now mimic the discussion of the previous section for the case of $\Pin^{\pm}$-structures.
In particular, we can consider when two $\Pin^{\pm}$-structures are $\Pin$-diffeomorphic or cobordant.
We show that these two equivalence relations are equal for closed surfaces, just as we saw for $\Spin$-structures in the previous section.  We also discuss how many $\Pin^{\pm}$-structures on a given surface are in a given cobordism class.  Most of our discussion concentrates on the case of $\Pin^{-}$-structures; at the end of this section we give pointers to the literature where the relevant results concerning $\Pin^{+}$-structures can be found.  

There exist two nonisomorphic double covers $\Pin^{\pm}_2 \to O_2$ that restrict to the double cover $\Spin_2 \to SO_2$.  
Reflections in $O_2$ lift to elements with square $1$ in $\Pin^+_2$ but to elements with square $-1$ in $\Pin^-_2$, which distinguishes the two double covers.
A $\Pin^{\pm}$-structure $p$ on a (now not necessarily orientable) closed surface $F^2$ is a $\Pin^{\pm}_2$-structure on the tangent bundle, which we again identify up to isomorphism.
For more background on pin structures see \cite{kirbytaylor}.  A map
 $$
 e : H_1(\Sigma; \mathbb{Z}/2\mathbb{Z}) \to \mathbb{Z}/4\mathbb{Z}
$$
which satisfies 
$$
 e(x+y) = e(x) + e(y) + 2 \cdot (x \cdot y)
$$
 for all $x,y \in H_1(\Sigma, \mathbb{Z}/2\mathbb{Z})$ is called a \emph{quadratic enhancement} of the intersection form.  
 Here $x \cdot y$ denotes the intersection of the classes $x$ and $y$ as an element of $\Z/2\Z$ and the $2 \cdot$ indicates the inclusion $\mathbb{Z}/2\mathbb{Z} \to \mathbb{Z}/4\mathbb{Z}$ .

 Let $V$ be a finite-dimensional $\mathbb{Z}/2\mathbb{Z}$-vector space with a symmetric nondegenerate bilinear form 
$$
 \cdot : V \otimes V \to \mathbb{Z}/2\mathbb{Z}
$$ 
and a map 
$$
 e : V \to \mathbb{Z}/4\mathbb{Z}
$$
which satisfies
$$
 e(x+y) = e(x) + e(y) + 2 \cdot (x \cdot y)
$$
for all $x,y \in V$.  Then there is an $\mathbb{Z}/8\mathbb{Z}$-valued invariant, denoted $\beta(e)$, called the Brown invariant which is defined as in Figure \ref{fig:compass} - see also section 3.2 of \cite{kirby_melvin}.  One property of the Brown invariant that will be important to our discussion is that it is additive in the sense that if we are given a pair $(V, \cdot, e_1)$ and $(W, \cdot, e_2)$ then we can form there sum in a natural way and 
$$
\beta(e_1 + e_2) = \beta(e_1) + \beta(e_2)
$$

\begin{figure}  	
	\centering
	\includegraphics[width=8cm]{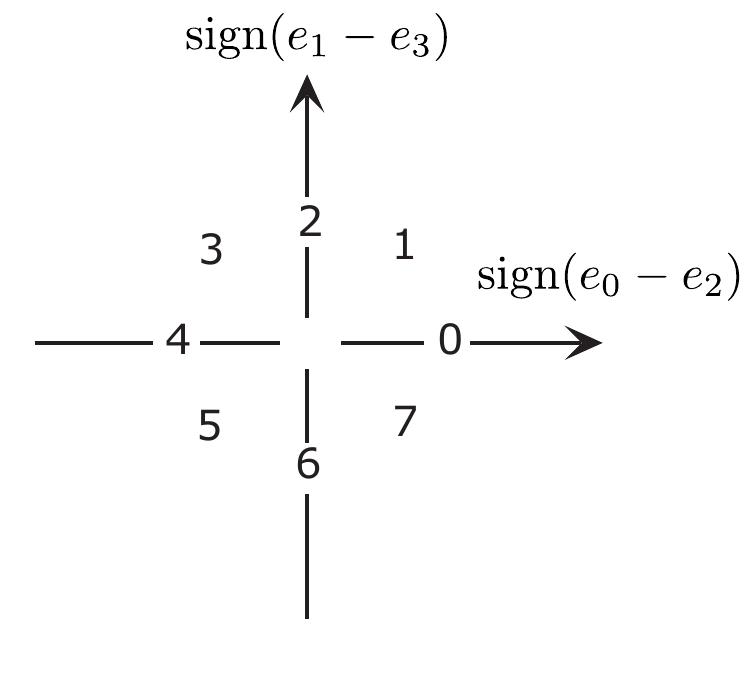}
	\caption{This figure gives the definition of the Brown invariant.  The quantities $e_i$ are the number of times that the enhancement $e$ takes on the value $i \in \mathbb{Z}/4\mathbb{Z}$ and $\text{sign}(e_0-e_2)$ and $\text{sign}(e_1-e_3)$ denote the signs (positive, zero, or negative) of the respective integers.  The quadrant is then determined by these two signs and the result is the Brown invariant.  For example, if $e_0 = e_2$ and $e_1 > e_3$ then the Brown invariant is $2$.}  
	\label{fig:compass}
\end{figure}

In \cite{kirbytaylor}, Kirby and Taylor give a construction, similar to the construction given by Johnson mentioned in the previous section, that gives a bijection between the isomorphism classes of $\Pin^-$ structures $p$ on $F$ and the set of quadratic enhancements of the intersection form of $F$.  Kirby and Taylor \cite{kirbytaylor} give an isomorphism
 \begin{align*}
	 \beta : \Omega_2^{\Pin^{-}} &\to \mathbb{Z}/8\mathbb{Z} \\
		 (F, p)   &\mapsto \beta(p)
 \end{align*}
where $\beta(p)$ is defined as the Brown invariant $\beta(e_p)$ where $e_p$ is the quadratic enhancement corresponding to the $\Pin^+$ structure $p$.

\begin{theorem} \label{thm:main}
Let $F$ be a closed surface.  Two $\Pin^-$-structures on $F$ are $\Pin$-diffeomorphic if and only if they have the same Brown invariant (i.e., if and only if they are cobordant).  
\end{theorem}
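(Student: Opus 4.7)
Since the Kirby-Taylor isomorphism $\beta:\Omega_2^{\Pin^-}\to \Z/8\Z$ is already in hand, the parenthetical equivalence with cobordism is automatic, and it suffices to prove that two $\Pin^-$-structures on $F$ are $\Pin$-diffeomorphic if and only if their Brown invariants agree. The ``only if'' direction is immediate: if $\phi:F\to F$ is a diffeomorphism with $\phi^*p_1 = p_2$, then naturality of the Kirby-Taylor correspondence gives $e_{p_2}(x) = e_{p_1}(\phi_*x)$ for every $x\in H_1(F;\Z/2\Z)$. Since $\phi_*$ is a bijection on $H_1(F;\Z/2\Z)$, the four preimage counts $e_0,e_1,e_2,e_3$ that determine $\beta$ are preserved, and so is the Brown invariant.

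For the converse, I plan to mimic the spin template of the previous section. Let $V = H_1(F;\Z/2\Z)$ equipped with its intersection form $\cdot\,$, and note that the pull-back action of a diffeomorphism $\phi$ on the set of quadratic enhancements factors as $(A\cdot e)(x) = e(A^{-1}x)$ for $A = \phi_*\in \Aut(V,\cdot)$. The proof then splits into two claims: an algebraic one (A), that any two quadratic enhancements of $(V,\cdot)$ with the same Brown invariant lie in a single $\Aut(V,\cdot)$-orbit; and a geometric one (B), that the natural map $\mathrm{Mod}(F)\to \Aut(V,\cdot)$ is surjective. Together with the Kirby-Taylor bijection, (A) and (B) imply that $\mathrm{Mod}(F)$ acts transitively on the set of $\Pin^-$-structures on $F$ with any fixed value of $\beta$, which is exactly the ``if'' direction.

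Claim (A) is essentially due to Brown. I would decompose $(V,\cdot)$ as an orthogonal sum of one-dimensional pieces $\langle 1\rangle$ (for the non-orientable summands) or hyperbolic planes (for the orientable summands), invoke the additivity $\beta(e_1\oplus e_2)=\beta(e_1)+\beta(e_2)$ already recorded in the previous display, and check on each normal form by direct computation that the Brown invariant is a complete $\Aut$-invariant of the enhancement. This step should be essentially routine once the normal form is fixed.

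Claim (B) is the main obstacle and has to be handled separately in the orientable and non-orientable cases. When $F=\Sigma_g$ is orientable, exactly as in section \ref{sec:spin}, the mapping class group surjects onto $\Sp_{2g}(\Z)$, and reduction modulo $2$ gives a surjection onto $\Sp_{2g}(\Z/2\Z) = \Aut(V,\cdot)$. When $F = \#^k\mathbb{RP}^2$ is non-orientable, the intersection form on $(\Z/2\Z)^k$ is non-degenerate and symmetric with nonzero diagonal, and $\Aut(V,\cdot)$ is a classical orthogonal group over $\Z/2\Z$; the desired surjection should be realized by Dehn twists along two-sided simple closed curves together with crosscap slides, and it is here that I expect to invoke the relevant surjectivity statement from the non-orientable mapping class group literature. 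Everything else in the argument is formal once (A) and (B) are in place.
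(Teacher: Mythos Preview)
Your approach is valid but takes a genuinely different route from the paper's. Both handle the orientable case the same way (via the spin story), but for non-orientable $F=N_k=\#^k\mathbb{RP}^2$ the paper argues by induction on $k$: it peels off one crosscap at a time, using only two explicit families of diffeomorphisms---permutations of the $\mathbb{RP}^2$-summands and, in the extremal case where every $e_1(x_i)$ differs from every $e_2(x_j)$, a diffeomorphism carrying $x_1$ to the banded curve $y=x_1+x_2+x_3$. This sidesteps your Claim~(B) altogether and is essentially self-contained. Your route is more structural and arguably cleaner once the pieces are in place, but its entire weight in the non-orientable case rests on the surjectivity of $\mathrm{Mod}(N_k)\to\Aut(H_1(N_k;\Z/2\Z),\cdot)$, which you leave as a black box. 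That surjectivity is true, but it is not nearly as standard as its orientable counterpart, and you should locate a precise reference rather than gesture at ``the literature''; the paper's hands-on argument buys you freedom from this dependency. One further remark on Claim~(A): it is correct and is indeed Brown's classification, but it is slightly less routine than your sketch suggests---already for $k=4$ the two enhancements with all $e(x_i)=1$ and all $e(x_i)=3$ share Brown invariant $4$, and the isometry of $(\Z/2\Z)^4$ relating them is not a coordinate permutation but the map $x_i\mapsto\sum_{j\neq i}x_j$, so ``direct computation on each normal form'' will require producing such non-obvious isometries.
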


\begin{proof}
	Let $p_1$ and $p_2$ be two $\Pin^-$-structures on $F$ and let $e_1$ and $e_2$ be the respective associated quadratic enhancements.  Suppose there is a diffeomorphism $\phi : F \to F$ with $\phi^\ast p_1 = p_2$.  By the construction of the quadratic enhancement from the $\Pin^-$-structure (see \cite{kirbytaylor}), $e_1(\phi_\ast x) = e_2(x)$ and therefore $\beta(e_1) = \beta(e_2)$, thus $\beta(p_1) = \beta(p_2)$.  

	Conversely, suppose that $\beta(p_1) = \beta(p_2)$.  If $F$ is orientable, then there is an orientation of $F$ and two spin structures $s_1$ and $s_2$ of $F$ with that orientation such that $s_1$ induces $p_1$ and $s_2$ induces $p_2$ (see \cite{kirbytaylor}).  Since $4 \Arf(s_i) = \beta(p_i)$ (where $4$ denotes the inclusion $\mathbb{Z}/2\mathbb{Z} \to \mathbb{Z}/8\mathbb{Z})$), it follows that  $\Arf(s_1) = \Arf(s_2)$ and hence, by the result of Johnson mentioned in the introduction, there exists a diffeomorphism $\phi : F \to F$ such that $\phi^\ast s_1 = s_2$.  Therefore $\phi^\ast p_1 = p_2$, as desired.  

	Assume that $F$ is nonorientable.  We proceed by induction on the nonorientable genus of $F$, defined to be $1-\chi(F)$.  In the genus $1$ case of $\mathbb{R}P^2$, the two $\Pin^-$-structures on $\mathbb{R}P^2$ have distinct Brown invariants (namely $\pm 1$) so there is nothing to check.  Note that, by the construction of a quadratic enhancement from a $\Pin^-$-structure, we know that if $e$ is such a quadratic enhancement and $x \in H_1(F; \mathbb{Z}/2\mathbb{Z})$ with $w_1(x) = 1$, then $e(x) \in \{1,3\}$.  
	For the higher genus case, choose a diffeomorphism of $F$ with the connected sum of $k$ copies of $\mathbb{R}P^2$.  We then have a basis for $H_1(F;\mathbb{Z}/2\mathbb{Z})$ given by the $\mathbb{R}P^1$ circles in each $\mathbb{R}P^2$-summand, call these classes $x_1,...,x_k$.  
	
	First, assume that $e_1(x_i) = e_2(x_j)$ for some indices $i$ and $j$.  
	Then, $p_1$ and $p_2$ are each $\Pin$-diffeomorphic to new $\Pin^-$-structures (which we again call $p_1$ and $p_2$) such that for the respective induced quadratic enhancements (which we again call $e_1$ and $e_2$) we have $e_1(x_1) = e_2(x_1)$.  
	These respective diffeomorphisms are the diffeomorphisms that interchange the 1st and $i$th (respectively 1st and $j$th) $\mathbb{R}P^2$-summands.  
	Let $F'$ denote the subsurface of $F$ containing the $(k-1)$ $\mathbb{R}P^2$-summands that exclude the summand with $x_1$.  Note that there are two $\Pin^-$-structures on $S^1$. 
	The unique $\Pin^-$-structure on the disk $D^2$, when restricted to the boundary $S^1$, is the same as the $\Pin^-$-structure coming from restricting the $\Pin^-$-structure on $F'$ to the boundary.  
	Therefore, we can cap off the surface $F'$  using a disk to give a surface which we denote by $\overline{F}'$, and both of $p_1$ and $p_2$ will extend over this disk uniquely to give $\Pin^-$-structures $\overline{p_1}$ and $\overline{p_2}$ on $\overline{F}'$.  
	Now, by the additivity of the Brown invariant with respect to disjoint union, since $\beta(p_1) = \beta(p_2)$, we have that $\beta(\overline{p_1}) = \beta(\overline{p_2})$.  
	Therefore, by the induction hypothesis, there exists a diffeomorphism $\psi : \overline{F}' \to \overline{F}'$ such that $\psi^\ast \overline{p_1} = \overline{p_2}$. 
	By isotoping $\psi$, we may assume that $\psi$ is the identity on the disk that was used for the capping off, and therefore, by letting $\phi: F \to F$ be $\psi$ on the portion of $F$ that is $\overline{F}'$ without the capping off disk, and the identity on the last $\mathbb{R}P^2$-summand, we have $\phi^\ast p_1 = p_2$, as desired.

	Finally, we consider the case where $e_1(x_i) =1$ and $e_2(x_i) = 3$ for all $1 \leq i \leq k$.  In this case, by the additivity of the Brown invariant, we have $k = -k \pmod{4}$ and in particular, $k \geq 3$.  Consider the embedded curve in $F$ obtained by banding $x_1$ to $x_2$ and then banding the result to $x_3$.  Call this curve $y$, then 
	$$
	e_1(y) = 1 + 1 + 1 = 3
	$$
	and $w_1(y) = 1$.  By applying a diffeomorphism $F \to F$ that interchanges $x_1$ and $y$, we see that $p_1$ is $\Pin$-diffeomorphic to a $\Pin^-$-structure on $F$, which we again call $p_1$, with the property that $e_1(x_1) = e_2(x_1)$, where $e_1$ is the new associated quadratic enhancement.  Now applying the argument in the previous paragraph implies that $p_1$ and $p_2$ are $\Pin$-diffeomorphic, as desired.  
\end{proof}

\begin{corollary} \label{cor}
Let $F$ be a closed surface.  Two $\Pin^-$-structures on $F$ are cobordant if and only if they are $\Pin$-diffeomorphic.
\end{corollary}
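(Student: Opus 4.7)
The plan is simply to chain together the two biconditionals already assembled just above. The Kirby--Taylor isomorphism $\beta : \Omega_2^{\Pin^-} \to \mathbb{Z}/8\mathbb{Z}$ recalled immediately before Theorem \ref{thm:main} implies that two $\Pin^-$-structures $p_1$ and $p_2$ on the same closed surface $F$ are cobordant (as elements of the abstract bordism group) if and only if $\beta(p_1) = \beta(p_2)$, since an isomorphism detects classes by their image. Theorem \ref{thm:main} then provides the equivalence between equality of Brown invariants and the existence of a diffeomorphism $\phi : F \to F$ with $\phi^\ast p_1 = p_2$, i.e.\ $\Pin$-diffeomorphism. Concatenating the two biconditionals yields the corollary, so the whole proof should be a one-line appeal to Theorem \ref{thm:main} together with the Kirby--Taylor identification.

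There is no real obstacle remaining: all the substantive work has already been done in Theorem \ref{thm:main} (in particular in the induction on nonorientable genus, and most delicately in the subcase where $e_1$ and $e_2$ take the complementary odd values $1$ and $3$ on each $\mathbb{R}P^2$-summand, forcing $k \geq 3$ and the band-sum trick), and in the Kirby--Taylor computation of $\Omega_2^{\Pin^-}$. The only thing I would take care to make explicit in writing the proof is that the cobordance in question is that of $(F, p_1)$ with $(F, p_2)$ as abstract $\Pin^-$-manifolds, which is precisely what equality of classes in $\Omega_2^{\Pin^-}$ records, so that the biconditional through $\beta$ is legitimately applicable.
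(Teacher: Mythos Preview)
Your proposal is correct and is exactly the paper's approach: the paper's proof is a one-line appeal to Theorem \ref{thm:main} together with the Kirby--Taylor isomorphism $\beta : \Omega_2^{\Pin^{-}} \to \mathbb{Z}/8\mathbb{Z}$. Your additional remarks about the meaning of cobordance and the content of Theorem \ref{thm:main} are accurate but not needed for the formal proof.
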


\begin{proof}
	This follows from Theorem \ref{thm:main} together with the isomorphism $\beta : \Omega_2^{\Pin^{-}} \to \mathbb{Z}/8\mathbb{Z}$.  
\end{proof}

For a closed orientable surface $F$, let $\beta_i(F)$ denote the number of isomorphism classes of $\Pin^-$-structures on $F$ that have Brown invariant equal to $i \in \mathbb{Z}/8\mathbb{Z}$.

\begin{theorem}
Let $\Sigma_g$ be a closed orientable surface of genus $g$, then
$$
	\beta_i(\Sigma_g) = 
	\begin{cases} 
		2^{g-1}(2^g + 1) & \text{if } i = 0 \\
		2^{g-1}(2^g - 1) & \text{if } i = 4 \\
		0 & \text{otherwise} 
   	\end{cases}
$$

Let $N_k$ denote a nonorientable closed surface of nonorientable genus $k$, then for $k$ odd we have
$$
	\beta_i(N_k) = 
	\begin{cases} 
		2^{k-2} + 2^{\frac{k-3}{2}} & \text{if } i = 1 \text{ or } i = 7\\
		2^{k-2} - 2^{\frac{k-3}{2}} & \text{if } i = 3  \text{ or } i = 5\\
		0 & \text{if $i$ is even} 
   	\end{cases}
$$
and for $k$ even
$$
	\beta_i(N_k) = 
	\begin{cases} 
		2^{\frac{3k-6}{2}} & \text{if } i = 0\\
		2^{k-2} & \text{if } i = 2  \text{ or } i = 6\\
		2^{k-2} - 2^{\frac{k-2}{2}} & \text{if } i = 4\\
		0 & \text{if $i$ is even} 
   	\end{cases}
$$
\end{theorem}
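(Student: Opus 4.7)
The statement naturally splits into the orientable and nonorientable cases, which I would handle by completely different methods.

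For $\Sigma_g$ the plan is to reduce directly to Johnson's count of spin structures recalled in section \ref{sec:spin}. After fixing an orientation, the inclusion $\Spin_2 \hookrightarrow \Pin^-_2$ induces a natural map from spin structures on $\Sigma_g$ to $\Pin^-$-structures; both sets are torsors over $H^1(\Sigma_g;\mathbb{Z}/2\mathbb{Z})$ and the map is $H^1$-equivariant, so it is a bijection. The relation $\beta(p) = 4\,\Arf(s)$ used in the proof of Theorem \ref{thm:main} identifies the Brown-invariant level sets with those of the Arf invariant, so Johnson's counts $2^{g-1}(2^g+1)$ and $2^{g-1}(2^g-1)$ transfer directly to give the orientable formula.

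For $N_k$ I would fix a connected-sum decomposition into $k$ copies of $\mathbb{R}P^2$ and work with the basis $x_1,\ldots,x_k$ of $H_1(N_k;\mathbb{Z}/2\mathbb{Z})$ given by the projective lines, which satisfies $x_i\cdot x_j = \delta_{ij}$ and $w_1(x_i)=1$. As in the proof of Theorem \ref{thm:main}, each value $e(x_i)$ must lie in $\{1,3\}$, and any such assignment extends uniquely to a valid quadratic enhancement, so the set of enhancements is naturally parametrised by $\{1,3\}^k$. Letting $m$ denote the number of indices with $e(x_i)=1$, I would compute $\beta(e)\pmod 8$ using the Gauss-sum description of the Brown invariant. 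The orthogonality $x_i\cdot x_j=0$ for $i\neq j$ implies $e(\sum_{i\in S} x_i) \equiv \sum_{i\in S} e(x_i)\pmod 4$, so the Gauss sum factors completely as
$$
\sum_{v} i^{e(v)} \;=\; \prod_{j=1}^{k}\bigl(1+i^{e(x_j)}\bigr) \;=\; (1+i)^m(1-i)^{k-m} \;=\; 2^{k/2}\,e^{i\pi(2m-k)/4},
$$
from which $\beta(e) \equiv 2m-k \pmod 8$ reads off.

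With the Brown invariant expressed as a function of $m$ alone, counting enhancements with a given Brown invariant $i$ reduces to counting those $m\in\{0,\ldots,k\}$ in a fixed residue class modulo $4$, weighted by $\binom{k}{m}$. The parity of $i$ is forced to agree with that of $k$, accounting for the vanishing rows, and the remaining sums are evaluated by the standard roots-of-unity filter
$$
\sum_{m \equiv r\pmod 4}\binom{k}{m} \;=\; \tfrac14\sum_{j=0}^{3} i^{-jr}(1+i^j)^k,
$$
simplified via $(1+i)^k = 2^{k/2}e^{i\pi k/4}$ and split by the parity of $k$. I expect the only step with any real content to be the identification $\beta(e) \equiv 2m-k \pmod 8$ via the Gauss sum; after that the binomial-coefficient sums fall out essentially by inspection.
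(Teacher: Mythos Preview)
Your argument is correct, and for the orientable case it coincides with the paper's. For $N_k$, however, the paper proceeds differently. After the same parametrisation of enhancements by $\{1,3\}^k$, the paper removes one $\mathbb{R}P^2$-summand, caps off both pieces, and uses additivity of the Brown invariant under connected sum to derive the recursion
\[
\beta_i(N_k)=\beta_{i-1}(N_{k-1})+\beta_{i+1}(N_{k-1}),
\]
which it then declares solved. Your Gauss-sum computation is essentially the closed form of that recursion: the factorisation $\sum_v i^{e(v)}=\prod_j\bigl(1+i^{e(x_j)}\bigr)$ is the multiplicative counterpart of the paper's one-summand-at-a-time splitting. The payoff of your route is the explicit formula $\beta(e)\equiv 2m-k\pmod 8$, after which the counting is a mechanical roots-of-unity filter; the paper's recursion never surfaces this formula and leaves the endgame to the reader. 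As a side benefit, carrying your computation through will show that the stated value $2^{(3k-6)/2}$ for $\beta_0(N_k)$ with $k$ even is a misprint for $2^{k-2}+2^{(k-2)/2}$, and that the final ``$0$ if $i$ is even'' in the $k$-even table should read ``odd''.
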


\begin{proof}
	The orientable case follows from the fact that on such an orientable surface, all of the $\Pin^-$-structures are induced by $\Spin$-structures, the relationship between $\beta$ and $\Arf$, and Johnson's determination of the distribution of the $\Arf$ invariant for the different $\Spin$-structures on an orientable surface.  Noting that by the construction of a quadratic enhancement from a $\Pin^-$-structure, if $e$ is such a quadratic enhancement and $x \in H_1(F; \mathbb{Z}/2\mathbb{Z})$ with $w_1(x) = 1$, then $e(x) \in \{1,3\}$.  Decompose $N_k$ as the connected sum of $k$-copies of $\mathbb{R}P^2$ and let $x_1,...,x_k \in H_1(N_k; \mathbb{Z}/2\mathbb{Z})$ denote the core circles of the $\mathbb{R}P^1$ in each summand.  Note that there are $|H^1(N_k; \mathbb{Z}/2\mathbb{Z})| = 2^k$ distinct $\Pin^-$-structures on $N_k$ and that by using the identity
	$$
	e(x+y) = e(x) + e(y) + 2 \cdot(x,y)
	$$
together with the fact that $x_1,..,x_k$ is a basis for $ H_1(N_k; \mathbb{Z}/2\mathbb{Z})$, we see that each $\Pin^-$-structure is completely determined by its values on $x_1,...,x_k$, and that each possible assignment of each $x_i$ to either $1$ or $3$ is a valid $\Pin^-$-structure.  

	Restricting attention to the first $\mathbb{R}P^2$-summand, we have either $e(x_1) =1$ or $e(x) = 3$.  Remove the first $\mathbb{R}P^2$-summand, capping both resulting surfaces off with disks, and extend the $\Pin^-$-structures over these disks.  By the additivity of the Brown invariant, the Brown invariant of $F$ is equal to the sum of the Brown invariants of these two surfaces and this Brown invariant for $\mathbb{R}P^2$ is either $1$ or $-1$.  We thus obtain the recursion
	$$
	\beta_i(N_k) = \beta_{i-1}(N_{k-1}) + \beta_{i+1}(N_{k-1})
	$$
Solving explicitly for this recursion yields the result.  
\end{proof}

We now briefly discuss $\Pin^+$-structures.  The analogue of quadratic enhancements $e$ in the case of $\Pin^+$-structures is a map
$$
q : H_1(F; \mathbb{Z}/4\mathbb{Z}) \to \mathbb{Z}/2\mathbb{Z}
$$
satisfying
$$
q(x + y) = q(x) + q(y) + x \cdot y
$$
for all $x,y \in H_1(F; \mathbb{Z}/4\mathbb{Z})$.  In \cite{degtyarev_finashin}, Degtyarev and Finashin consider such maps $q$ and prove that they are in bijective correspondence with $\Pin^+$-structures on $F$.  Additionally, they prove the exact analogue of Corollary \ref{cor} in the case of $\Pin^+$ structures.   Kirby and Taylor compute $\Omega_2^{\Pin^{+}} \cong \mathbb{Z}/2\mathbb{Z}$ and show that any orientable surface with a $\Pin^+$-structure is trivial in $\Omega_2^{\Pin^{+}}$ (see Proposition 3.9 of \cite{kirbytaylor}).  Recall that $\Pin^+$-structures only exist on closed nonorientable surfaces that are a connect sum of an even number of $\mathbb{R}P^2$'s.  Kirby and Taylor show that exactly half of the $\Pin^+$-structures on the Klein bottle are nontrivial in $\Omega_2^{\Pin^{+}}$ (see Proposition 3.9 of \cite{kirbytaylor}) from which it follows that for any closed nonorientable surface, exactly half of the $\Pin^+$-structures are nontrivial in $\Omega_2^{\Pin^{+}}$.

\section{Higher-dimensional examples} \label{sec:high_dim}

In this section, we demonstrate the failure of the analogue of Corollary \ref{cor} in dimensions greater than 2.  We begin with some preliminary lemmas.  Lemma \ref{lem:higher} was provided to us by Jean Raimbault.


\begin{lemma}
\label{lemma}
Let $\Delta \subseteq D$ be a finite index subgroup and consider the short exact sequence
\[
1 \to \Delta \to D \to D/\Delta \to 1.
\]
Then $D/\Delta$ acts on the group cohomology $H^1(\Delta;\Q)$ via 
\[
([d] \cdot \psi)(\delta) = \psi(d \delta d^{-1}) \quad d \in D, \delta \in \Delta.
\]
Moreover, the restriction map 
\[
r: H^1(D;\Q) \to H^1(\Delta;\Q)
\]
induces a bijection onto the fixed points of this action.
In particular, $b_1(\Delta,\Q) \geq b_1(D,\Q)$
\end{lemma}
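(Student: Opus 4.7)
The plan is to invoke the Lyndon--Hochschild--Serre spectral sequence for the extension $1 \to \Delta \to D \to D/\Delta \to 1$, which takes the form
$$E_2^{p,q} = H^p(D/\Delta; H^q(\Delta;\Q)) \Longrightarrow H^{p+q}(D;\Q),$$
with the $D/\Delta$-action on the coefficients $H^q(\Delta;\Q)$ being precisely the conjugation action described in the statement.

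The key observation is that $D/\Delta$ is finite and the coefficients are $\Q$-vector spaces, so the averaging idempotent $\tfrac{1}{|D/\Delta|}\sum_{g\in D/\Delta} g$ exists and shows $H^p(D/\Delta; N) = 0$ for every $p \geq 1$ and every $\Q[D/\Delta]$-module $N$. The spectral sequence therefore collapses on the $E_2$ page, and the edge map
$$H^n(D;\Q) \xrightarrow{\cong} H^0(D/\Delta; H^n(\Delta;\Q)) = H^n(\Delta;\Q)^{D/\Delta}$$
is an isomorphism; this edge map coincides with the restriction $r$. Specialising to $n=1$ gives the main bijection, and the inequality $b_1(\Delta;\Q) \geq b_1(D;\Q)$ is then immediate from the injectivity of $r$.

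The bookkeeping is essentially formal and I do not expect a serious obstacle. The only point that requires care is verifying that the abstract edge map of the spectral sequence agrees with the concrete restriction homomorphism described in the statement, but this is standard. A more elementary alternative avoiding spectral sequences is to work directly with $H^1(G;\Q) = \Hom(G,\Q)$: injectivity of $r$ follows because a homomorphism $D \to \Q$ vanishing on $\Delta$ factors through the finite group $D/\Delta$ and must therefore be zero, while surjectivity onto the fixed points can be produced via the transfer (Verlagerung) $V \colon D \to \Delta^{\mathrm{ab}}$, extending an invariant $\psi$ to $\tilde\phi := \tfrac{1}{[D:\Delta]}\,\psi \circ V$ and checking $\tilde\phi|_\Delta = \psi$ by exploiting the $D/\Delta$-invariance of $\psi$.
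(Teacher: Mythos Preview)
Your proposal is correct. Both routes you sketch—the Lyndon--Hochschild--Serre collapse over $\Q$ and the transfer-based elementary argument—are valid, and the identification of the edge map with restriction is standard.

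The paper, however, takes neither of these paths. It works entirely by hand with $H^1(G;\Q)=\Hom(G,\Q)$: the well-definedness of the $D/\Delta$-action and the injectivity of $r$ are argued exactly as in your alternative (a homomorphism to $\Q$ vanishing on a finite-index subgroup is zero because $\Q$ is torsion-free), but for surjectivity the paper does not use the transfer. Instead it exploits the injectivity of $\Q$ as an abelian group: an invariant $\psi:\Delta\to\Q$ extends along $\Delta^{\mathrm{ab}}\to D^{\mathrm{ab}}$ provided $\psi$ vanishes on the kernel $([D,D]\cap\Delta)/[\Delta,\Delta]$, and this vanishing is checked by a bare-hands commutator computation (writing $d_1 d_2^k d_1^{-1}=([d_1,d_2]d_2)^k$ for $k$ with $d_2^k\in\Delta$ and using $[D,\Delta]\subseteq\ker\psi$). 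Your spectral-sequence argument is cleaner and yields the isomorphism in all degrees at once, while your transfer argument is arguably slicker than the paper's commutator manipulation; the paper's virtue is that it is completely self-contained and avoids any black boxes.
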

\begin{proof}
    We first show that the action is well-defined.
    If $\delta,\delta' \in \Delta$, then for a homomorphism $\psi: \Delta \to \Q$ we compute
    \[
    \psi(\delta \delta' \delta^{-1}) = \psi(\delta) + \psi(\delta') - \psi(\delta) = \psi(\delta'),
    \]
    so acting with $d := \delta \in \Delta$ fixes $\psi$.
    Hence the action is independent of the choice of representative.
    
    To show that $r$ maps to the fixed point set, let $\phi: D \to \Q$ be given.
    For $\delta \in \Delta$ and $d \in D$ we can compute
    \[
    \phi(d \delta d^{-1}) = \phi(d) + \phi(\delta) - \phi(d) = \phi(\delta),
    \]
    so $r(\phi)$ is fixed under the aciton of $[d]$.
    
    For injectivity, let $\phi: D \to \Q$ be a homomorphism that restricts to zero on $\Delta$.
    Let $d \in D$ and pick $n \in \N$ such that $d^n \in \Delta$.
    Then 
    \[
    \phi(d^n) = n \phi(d) = 0 \implies \phi(d) = 0
    \]
    because we are in $\Q$.
    
    For surjectivity, let $\psi: \Delta \to \Q$ be in the fixed subspace.
    Since $\Q$ is abelian, we have to show that there exists a lift
    \[
    \begin{tikzcd}
    \frac{\Delta}{[\Delta, \Delta]} \arrow[r,"f"] \arrow[d,"\psi"]& \frac{D}{[D,D]} \arrow[dl, dashed]
    \\
    \Q &
    \end{tikzcd}
    \]
    By injectivity of $\Q$ such a lift would exist if $f$ were injective but
    \[
    \ker f = \frac{[D,D] \cap \Delta}{[\Delta, \Delta]}.
    \]
    Therefore it suffices to show that $\psi$ vanishes on $[D,D] \cap \Delta$.
    So let $[d_1, d_2] \in \Delta$ for some $d_1, d_2 \in D$.
    Pick $k$ large enough so that $d_2^k \in \Delta$.
    Write
    \[
    d_1 d_2^k d_1^{-1} = ([d_1,d_2] d_2)^k.
    \]
    Since $[d_1, d_2] \in \Delta$ we have that
    \[
    [d_1,d_2] d_2 = d_2 [d_1,d_2] \mod [D,\Delta] \implies ([d_1,d_2] d_2)^k = [d_1,d_2]^k d_2^k \mod [D,\Delta].
    \]
    Since $[D,\Delta] \subseteq \ker \psi$ by invariance under the action, we arrive at
    \[
    0 = \psi(d_1 d_2^k d_1^{-1} d_2^{-k}) = \psi(([d_1,d_2] d_2)^k d_2^{-k}) = \psi([d_1,d_2]^k) = k \psi([d_1,d_2]).
    \]
    It follows that $b_1(\Delta,\Q) = \dim H^1(\Delta,\Q) \geq \dim H^1(D,\Q) = b_1(D,\Q)$.
\end{proof}

\begin{lemma} \label{lem:higher}
For all integers $n>2$ and $m \geq 0$ there are $n$-dimensional manifolds $X$ with $b_1(X) \geq m$ and $\pi_0(Diff(X)) = 0$.
\end{lemma}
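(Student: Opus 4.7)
The plan is to take $X$ to be a closed hyperbolic $n$-manifold whose isometry group is trivial and whose first Betti number is at least $m$. Mostow rigidity then forces $\pi_0(\mathrm{Diff}(X))$ to be trivial (modulo the standard diffeomorphism-rigidity results for aspherical manifolds in dimensions $\geq 3$), and the first Betti number will be controlled via Lemma \ref{lemma}.

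I would proceed in three stages. First, produce a closed hyperbolic $n$-manifold $M$ with $b_1(M) \geq m$. For $n \geq 3$ one uses arithmetic hyperbolic lattices of simple type (defined by quadratic forms over totally real number fields); results of Millson and Borel--Wallach yield finite covers with arbitrarily large first Betti number. For $n = 3$ one may alternatively invoke Agol's virtual fibering theorem, but the arithmetic construction works uniformly across dimensions. Second, pass to a further finite cover $X \to M$ that kills all isometries: if $\Gamma = \pi_1(M)$ sits as a torsion-free lattice in $G = \mathrm{Isom}(\mathbb{H}^n)$, then $\mathrm{Isom}(M) = N_G(\Gamma)/\Gamma$ is finite, and residual finiteness of $\Gamma$ (as a linear group) allows one to find, by a standard intersection-of-conjugates argument, a finite-index subgroup $\Gamma' \leq \Gamma$ with $N_G(\Gamma') = \Gamma'$, so that $X := \mathbb{H}^n/\Gamma'$ has trivial isometry group.

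Third, invoke Mostow rigidity together with the smooth rigidity results for closed hyperbolic manifolds (Gabai's theorem in dimension three, Farrell--Jones in higher dimensions) to upgrade the identification $\mathrm{Out}(\pi_1(X)) \cong \mathrm{Isom}(X)$ to a statement about $\pi_0(\mathrm{Diff}(X))$, yielding $\pi_0(\mathrm{Diff}(X)) \cong \mathrm{Isom}(X) = 0$. Finally, Lemma \ref{lemma} applied to the inclusion $\Gamma' \leq \Gamma$ gives $b_1(X, \mathbb{Q}) \geq b_1(M, \mathbb{Q}) \geq m$, as required.

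The main obstacle is the first stage: arranging for $b_1 \geq m$ in every dimension $n \geq 3$ requires nontrivial arithmetic input and is where the reference \cite{hyperbolicmanifolds} does the heavy lifting. The isometry-killing step is a comparatively routine application of residual finiteness, the rigidity step is a citation of Mostow and Farrell--Jones, and the role of Lemma \ref{lemma} is precisely to ensure that passing to a finite cover in step two does not destroy the first Betti number bound obtained in step one.
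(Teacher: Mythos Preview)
Your overall architecture matches the paper's---closed hyperbolic $n$-manifolds, Mostow rigidity, and Lemma~\ref{lemma} to preserve $b_1$ under finite covers---but the division of labor is different and your Stage~2 contains a genuine gap. You obtain large $b_1$ via \emph{arithmetic} lattices (Millson) and then claim that a ``standard intersection-of-conjugates argument'' using residual finiteness yields a finite-index $\Gamma' \leq \Gamma$ with $N_G(\Gamma') = \Gamma'$. But for an arithmetic lattice the commensurator $\mathrm{Comm}_G(\Gamma)$ is dense in $G$ (Margulis), so when you pass to $\Gamma'$ the normalizer $N_G(\Gamma')$ may pick up elements from this dense set that lie in no a~priori fixed lattice containing $\Gamma$; residual finiteness of $\Gamma$ only lets you control normalizers \emph{inside} a fixed ambient discrete group, not in all of $G$. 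Arranging trivial isometry group in every dimension is exactly the nontrivial content of \cite{hyperbolicmanifolds}, not a routine consequence of residual finiteness---you have mislocated where that reference carries the weight. (A smaller issue: Millson gives virtual $b_1>0$; pushing this to $b_1\geq m$ for arbitrary $m$ needs an extra argument.)

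The paper avoids this by working directly inside the Belolipetsky--Lubotzky construction, which starts from a maximal \emph{nonarithmetic} lattice $\Gamma$ (so the commensurator is itself a lattice and the normalizer problem becomes tractable) together with a chain $M \lhd \Delta \lhd \Gamma$ where $\Delta/M$ is free nonabelian. Rather than importing large $b_1$ from an arithmetic source, the paper shows one may pass to finite-index subgroups so that the free quotient $\Delta/M$ has rank at least $m$, while keeping the hypotheses of \cite[Proposition~4.1]{hyperbolicmanifolds} intact; then Lemma~\ref{lemma}, combined with the fact that the relevant conjugation action on $\Delta/M$ is inner, gives $b_1(X)\geq m$ for the resulting asymmetric manifold $X=\mathbb{H}^n/B$. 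A clean repair of your route would be to interbreed two arithmetic pieces \`a~la Gromov--Piatetski-Shapiro before Stage~2, landing in a nonarithmetic commensurability class where your residual-finiteness step becomes legitimate. Incidentally, your Stage~3 is more careful than the paper's one-line appeal to Mostow: homotopy-implies-isotopy for diffeomorphisms of closed hyperbolic manifolds really does require Gabai in dimension~$3$ and Farrell--Jones in higher dimensions.
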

\begin{proof}
Theorem 1.1 in \cite{hyperbolicmanifolds} states that there exist infinitely many hyperbolic $n$-manifolds with isometry group being an arbitrary finite group $G$ - for us the case of interest is with $G$ being the trivial group.
By Mostov rigidity, every diffeomorphism is homotopic to an isometry, so these have trivial mapping class group.
Therefore it suffices to show that their construction allows for manifolds with arbitrarily large $H^1(X;\Q)$.

We start with any choice of groups $M \lhd \Delta \lhd \Gamma, D \leq \Gamma$ satisfying the assumptions of \cite[proposition 4.1]{hyperbolicmanifolds}, which in particular means $\Delta /M$ is a nonabelian free group.
The first claim is that we can assume $\Delta/M$ has rank at least $m$.  Let $F \leq \Delta/M$ be a finite index subgroup which is a free group of rank at least $m$.  Let $\Delta' \subseteq \Gamma$ be the preimage of $F$ under the projection $\Delta \to \Delta/M$ and let $\Delta''$ be the largest subgroup that is normal in $\Gamma$ and contained in $\Delta'$.  So we have $\Delta'/M$ a finitely generated free group of rank at least $m$.  Let $M'' = \Delta'' \cap M$.  We claim that $M'' \lhd \Delta'' \lhd \Gamma$ again satisfy the desired hypotheses. Note that $[\Delta : \Delta'] < \infty$ because $F$ is finite index in $\Delta/M$.  Since normal cores of finite index subgroups are again finite index and $[\Gamma : \Delta] < \infty$, we have that $[\Gamma : \Delta''] < \infty$.  Since $M$ is a normal subgroup of $\Delta'$, we have that $M''$ is a normal subgroup in $\Delta''$.  Now the map $\Delta'' \to \Delta' \to \Delta'/M$ factors through and yields an inclusion $\Delta''/M'' \to \Delta'/M$.  Further, since $[\Delta':\Delta''] < \infty$, also $[\Delta'/M : \Delta''/M'']<\infty$ which implies that $\Delta''/M$ is free of rank greater than or equal to $m$ (since finite-index subgroups of a free group $F_n$ are free of rank at least $n$).  Thus by replacing $\Delta$ and $M$ with $\Delta''$ and $M''$ if necessary, we may assume that $\Delta/M$ has rank at least $m$.



With an appropriate choice of $\Gamma$ as a maximal cocompact nonarithmetic lattice in the isometry group of $\mathbb{H}^n$, there will also exist the appropriate $\Delta, D, B, M$ inside of $\Gamma$ (see section 4 of \cite{hyperbolicmanifolds} with the suitable modification as in the previous paragraph so that $\Delta/M$ has rank at least $m$) so that we have a hyperbolic manifold $X := \mathbb{H}^n / B$ with $\pi_0(\operatorname{Diff}(X)) = N_\Gamma(B)/B$.  It suffices to show that $b_1(X) \geq m$.

Note that $X$ is a $K(B,1)$ and so $b_1(X) = \dim H^1(B,\Q)$.
It follows from Lemma \ref{lemma} that
\[
\dim H^1(B,\Q) \geq \dim H^1(D,\Q) = \dim H^1(\Delta,\Q)^{D/\Delta}.
\]
By assumption \cite[proposition 4.1(iii)]{hyperbolicmanifolds} the conjugation action of $D$ is inner on $\Delta/M$.
Therefore 
\[
\dim H^1(\Delta,\Q)^{D/\Delta} \geq \dim H^1(\Delta /M, \Q) \geq m
\]
which implies that $b_1(X) \geq m$, as desired.  
\end{proof}

\begin{theorem}
For all $n >2$, there exists $n$-manifolds $X^n$ with two $\Pin^-$-structures $p_1$ and $p_2$ that are cobordant but not $\Pin^-$-diffeomorphic.  The same result also holds for $\Pin^+$-structures.  
\end{theorem}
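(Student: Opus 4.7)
My plan is to deduce the theorem from Lemma \ref{lem:higher} via a pigeonhole argument that exploits the finiteness of the bordism groups $\Omega_n^{\Pin^\pm}$ for every $n$.

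The key elementary observation is that if $X$ is a closed $n$-manifold with $\pi_0(\operatorname{Diff}(X)) = 0$, then every self-diffeomorphism of $X$ is isotopic to the identity, hence acts trivially on the set of isomorphism classes of $\Pin^\pm$-structures on $X$. In particular, on such an $X$, two $\Pin^\pm$-structures are $\Pin$-diffeomorphic if and only if they are already isomorphic. Thus the theorem reduces to producing, for each $n>2$, a closed $n$-manifold $X$ with trivial mapping class group admitting two distinct isomorphism classes of $\Pin^\pm$-structures which are nevertheless cobordant.

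For this I would use that the set of $\Pin^\pm$-structures on a manifold $X$ (that admits one) forms a torsor over $H^1(X;\Z/2\Z)$, whose cardinality is at least $2^{b_1(X)}$. Choose $m$ so that $2^m > |\Omega_n^{\Pin^\pm}|$ and apply Lemma \ref{lem:higher} to obtain a closed $n$-manifold $X$ with $\pi_0(\operatorname{Diff}(X)) = 0$ and $b_1(X) \geq m$. Then $X$ carries more than $|\Omega_n^{\Pin^\pm}|$ isomorphism classes of $\Pin^\pm$-structures, so by pigeonhole two distinct such structures $p_1,p_2$ map to the same class in the bordism group and are cobordant; by the observation above they are not $\Pin$-diffeomorphic. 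The argument for $\Pin^+$ is word-for-word the same with $\Omega_n^{\Pin^+}$ in place of $\Omega_n^{\Pin^-}$.

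The most delicate step will be ensuring that the hyperbolic manifolds produced by Lemma \ref{lem:higher} admit a $\Pin^-$-structure (respectively a $\Pin^+$-structure), i.e.\ satisfy $w_2+w_1^2=0$ (respectively $w_2=0$). One natural route is to inspect the construction in \cite{hyperbolicmanifolds} and arrange the lattice $B\subseteq\operatorname{Isom}(\mathbb{H}^n)$ so that it lifts to the appropriate double cover, making $X=\mathbb{H}^n/B$ spin, which simultaneously yields both $\Pin^+$- and $\Pin^-$-structures. Alternatively, one can pass to a suitable finite characteristic cover of $X$, which by Mostow rigidity still has trivial mapping class group and whose first Betti number does not decrease, so that the pigeonhole argument continues to apply.
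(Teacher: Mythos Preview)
Your overall strategy---pigeonhole the many $\Pin^\pm$-structures on a hyperbolic manifold with trivial mapping class group (supplied by Lemma~\ref{lem:higher}) against the finite set $\Omega_n^{\Pin^\pm}$---is exactly the paper's approach. Two points, however, need attention.

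First, you take the finiteness of $\Omega_n^{\Pin^\pm}$ for granted. The paper actually supplies this: the fibration $B\Spin \to B\Pin^\pm \to B\Z_2$ gives a (twisted) Serre spectral sequence $H_p(B\Z_2;\Omega^{\Spin}_q)\Rightarrow\Omega^{\Pin^\pm}_{p+q}$, so $\Omega^{\Pin^\pm}_n$ is finitely generated because $\Omega^{\Spin}_*$ is; combined with the fact (from \cite{kirbytaylorcalculation}) that $\Omega^{\Pin^\pm}_*$ is all torsion, one gets finiteness. You should include this or a reference.

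Second, your proposed repair of the $\Pin^\pm$-existence issue via a finite characteristic cover is incorrect as stated. If $\tilde X\to X$ is a nontrivial finite regular cover of a closed hyperbolic $n$-manifold ($n\geq 3$), the deck group acts faithfully by isometries of $\tilde X$, and these isometries induce nontrivial outer automorphisms of $\pi_1(\tilde X)$ (lattices in $\operatorname{Isom}(\mathbb{H}^n)$ have trivial centralizer). Hence, by the same Mostow rigidity reasoning used in Lemma~\ref{lem:higher}, $\pi_0(\operatorname{Diff}(\tilde X))$ is \emph{nontrivial}, and the key hypothesis is lost. (For what it is worth, the paper's own proof simply does not address the existence of a $\Pin^\pm$-structure on the hyperbolic manifolds coming out of Lemma~\ref{lem:higher}; your first suggestion---arranging the lattice $B$ to lift to the relevant double cover of $O(n,1)$, so that $X=\mathbb{H}^n/B$ is spin---is the right direction, but it requires going back into the construction of \cite{hyperbolicmanifolds} rather than a post-hoc covering trick.)
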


\begin{proof}
If a manifold $X^n$ has trivial mapping class group, then any two distinct $\Pin^{\pm}$-structures on $X$ are not diffeomorphic.  
If $b_1(X) > |\Omega_n^{\Pin^\pm}|$ then there must, by the pigeonhole principle, exist a pair of distinct $\Pin$-structures on $X$ that are not cobordant.  
Thus, by using Lemma \ref{lem:higher} to choose $X$ with $b_1(X)$ arbitrarily large and the mapping class group of $X$ trivial, it suffices to prove that $\Omega_n^{\Pin^{\pm}}$ is always finite.  
To see this, note that the groups $\Omega_n^{\Pin^\pm}$ are finitely generated because they are connected to the groups $\Omega^{\Spin}_n$ by a twisted Serre spectral sequence. 
More precisely, there are fibrations of connected spaces
\[
B\Spin \to B\Pin^{\pm} \to B \Z_2
\]
compatible with the structure group maps $B\Spin \to BO$ and $B\Pin^{\pm} \to BO$.
Therefore there is a spectral sequence
\[
H_p(B \Z_2, \Omega^{\Spin}_q) \implies \Omega^{\Pin^{\pm}}_{p+q}
\]
where the coefficients $\Omega^{\Spin}_q$ are twisted.
Now $\Omega^{\Pin^{\pm}}_n$ are finitely generated, since $\Omega^{\Spin}_n$ are finitely generated \cite[page 336]{RS} and finitely generated groups form a Serre class.  
Then by \cite{kirbytaylorcalculation}, it follows that all elements in $\Omega_n^{\Pin^\pm}$ are torsion and therefore the groups $\Omega_n^{\Pin^\pm}$ are all finite.
\end{proof}

The above argument does not work for $\Spin$ structures (since $\Omega_n^{\Spin}$ can be infinite), and although we believe that the analog of Corollary \ref{cor} for $\Spin$-structures does not hold in dimensions greater than 2, we do not know how to prove this.      

\bibliography{pin}
\bibliographystyle{alpha}

\end{document}